\documentclass[12pt]{amsart}
\usepackage{tikz-cd}
\usepackage{newfloat}
\usepackage{tikz}
\makeatletter
\DeclareFloatingEnvironment[fileext=dia,within=section]{diagram}


\theoremstyle{plain}

\newtheorem{thm}{Theorem}[section]

\newtheorem{cor}[thm]{Corollary} 

\newtheorem{lem}[thm]{Lemma} 
\newtheorem{prop}[thm]{Proposition} 
\theoremstyle{definition}
\newtheorem{defn}[thm]{Definition}
\theoremstyle{remark}
\newtheorem{rem}[thm]{Remark}
\theoremstyle{remark}

\theoremstyle{remark}

\theoremstyle{remark}

\theoremstyle{definition}

\theoremstyle{definition}

\theoremstyle{plain}

\theoremstyle{definition}

\theoremstyle{remark}

\theoremstyle{remark}

\theoremstyle{definition}

\theoremstyle{remark}
  \newtheorem*{acknowledgement*}{Acknowledgement}

\hyphenation{quasi-diagonal}
\hyphenation{homo-morphisms}
\hyphenation{homo-morphism}

\newcommand{\bigslant}[2]{{\raisebox{.2em}{$#1$}\left/\raisebox{-.2em}{$#2$}\right.}}

\theoremstyle{theorem}
  \newtheorem{thmx}{Theorem}

\newcommand{\R}{{\mathbb R}}

\newcommand{\N}{{\mathbb N}}
\newcommand{\Z}{{\mathbb Z}}

\newcommand{\e}{\varepsilon}

\newcommand{\id}{\mathrm{id}}
\DeclareMathOperator{\tr}{tr}

\newcommand{\ip}[1]{\langle#1\rangle}


\DeclareMathOperator{\Aut}{Aut}

\newcommand{\T}{\mathbb{T}}
\newcommand{\SL}{\mathrm{SL}}
\newcommand{\GL}{\mathrm{GL}}

\DeclareMathOperator{\diag}{diag}

\newcommand{\ca}{\mathcal{A}}

\newcommand{\unit}{\mathbf{1}}
\newcount\colveccount
\newcommand*\colvec[1]{
        \global\colveccount#1
        \begin{pmatrix}
        \colvecnext
}
\def\colvecnext#1{
        #1
        \global\advance\colveccount-1
        \ifnum\colveccount>0
               \\
                \expandafter\colvecnext
        \else
                \end{pmatrix}
        \fi
}

\def\freeprod{\font\bigsymbolsfont=cmsy10 scaled \magstep3
 \setbox0=\hbox{\bigsymbolsfont\char'003 }\mathord{\lower1pt\box0}}\relax\ignorespaces

\newcommand{\Hawaii}{Hawai\kern.05em`\kern.05em\relax i}

%
%

\setlength{\oddsidemargin}{0.0in}
\setlength{\evensidemargin}{0.0in}
\setlength{\topmargin}{0.25in}
\setlength{\textheight}{8.0in}
\setlength{\textwidth}{6.5in}
\setlength{\marginparwidth}{1.75in}
\setlength{\marginparsep}{0.10in}

\usepackage{amssymb} \usepackage{amscd} \usepackage{hyperref}  \usepackage{tikz} \usepackage{soul}  \usetikzlibrary{matrix,arrows}

\makeatother

\begin{document}

\title{A note on crossed products of rotation algebras} 

\author{Christian B\"onicke}
\address{School of Mathematics and Statistics, University of Glasgow,
\newline University Gardens, Glasgow, G12 8QQ, UK}
\email{christian.bonicke@glasgow.ac.uk}

\author{Sayan Chakraborty}
\address{Department of Mathematics, Indian Institute of Science Education and Research Bhopal, 
\newline Bhopal Bypass Road, Bhopal, 462066, India}
\email{sayan2008@gmail.com}

\author{Zhuofeng He}
\address{Graduate School of Mathematical Sciences, The University of Tokyo,
	\newline 3-8-1 Komaba Meguro-ku Tokyo 153-8914, Japan}
\email{hzf@ms.u-tokyo.ac.jp}

\author{Hung-Chang Liao}
\address{Department of Mathematics and Statistics, University of Ottawa,
	\newline 150 Louis-Pasteur Private, Ottawa, ON K1N 9A7, Canada}
\email{hliao@uottawa.ca}

\date{\today}

\maketitle

\begin{abstract} We compute the $K$-theory of crossed products of rotation algebras $\ca_\theta$, for any real angle $\theta$, by matrices in $\SL(2,\Z)$ with infinite order. Using techniques of continuous fields, we show that the canonical inclusion of $\mathcal{A}_\theta$ into the crossed products is injective at the level of $K_0$-groups. We then give an explicit set of generators for the $K_0$-groups and compute the tracial ranges concretely.
\end{abstract}


\section{introduction}

The rotation algebra $\mathcal{A}_\theta$ associated with a real number $\theta$ is the universal $C^*$-algebra generated by unitaries $U_1$ and $U_2$ satisfying the relation
$$
U_2U_1 = e^{2\pi i\theta}U_1U_2.
$$
Any choice of a matrix
$$
A = \begin{pmatrix}
a & b \\ c & d
\end{pmatrix}
$$
in $\SL_2(\Z)$ determines an automorphism $\alpha_A$ given by
$$
\alpha_A(U_1)=e^{\pi i (ac)\theta} U_1^aU_2^c,\quad\alpha_A(U_2)=e^{\pi i(bd)\theta} U_1^bU_2^d.
$$
This assignment in fact defines an $\SL_2(\Z)$-action on $\ca_\theta$, which was first introduced by Watatani \cite{Wat} and Brenken \cite{Brenken}. Let $\ca_\theta\rtimes_{A}\Z$ denote the crossed product arising from the $C^*$-dynamical system $(\ca_\theta, \Z, \alpha_A)$. In the case $\theta$ is irrational, previous work of the authors \cite{BCHL18} combined an explicit $K$-theory calculation with recent progress in the classification program for simple, separable, nuclear $C^*$-algebras to show that the isomorphism class of $\ca_\theta \rtimes_A \Z$ is completely determined by $\theta$ and the matrix $A$, up to certain (very explicit) equivalence relations.

Crossed products with rational angles present greater challenges. For one thing, the algebras under consideration are not simple, so there is no readily accessible classification theorem. For another, the behavior of the automorphisms $\alpha_A$ and tracial states on the crossed products at the level of $K$-theory are not immediately clear, unlike in the irrational case where all automorphisms on $\ca_\theta$ are easily seen to induce the identity map on $K_0$ and there is only one tracial state on the crossed product.

Despite the difficulties, in \cite{FW94} Farsi and Watling gave a complete $K$-theoretic classification of $\ca_\theta \rtimes_A \Z$ when $\tr(A) = 2$, and proved many interesting partial results in the general case. One crucial ingredient underlying these results is the computation of tracial ranges of the $K_0$-groups, which in turn made use of a short exact sequence relating the tracial ranges and the de la Harpe-Skandalis determinant (see, for example, \cite[Proposition 6]{FW94} and the references therein).

This note represents the authors' attempt to understand this computation at a more concrete level. We identify both the rotational algebras and their crossed products with twisted group $C^*$-algebras, and then apply the technique of homotopic cocycles, as developed by Echterhoff, L\"uck, Phillips, and Walters in \cite{ELPW10}, to show that the canonical inclusion of $\ca_\theta$ inside $\ca_\theta\rtimes_A \Z$ always induces an injective map on $K_0$. Once this is done, an explicit set of generators can be found using the Pimsner-Voiculescu sequence as in the irrational case. It then follows that all tracial states on the crossed products agree on $K_0$, and the tracial range of each generators can be computed directly. The next theorem summarizes our computations (in the case $\tr(A) \neq  2$; the case $\tr(A) = 2$ is similar and treated in Theorem \ref{thm:Ktr=2}).

\begin{thmx}[Theorem \ref{thm:K-theory}]
	Let $\theta$ be any real number in $[0,1]$, and let $A\in \SL_2(\Z)$ be a matrix of infinite order with $\tr(A) \not\in\{0,\pm1,2\}$. Suppose the rank $2$ matrix $I_2-A^{-1}$  has  Smith normal form $\diag(h_1,h_2)$, then
	\begin{align*}
	K_0(\mathcal{A}_{\theta} \rtimes_A \Z) &\cong \Z\oplus \Z, \\
	K_1(\mathcal{A}_{\theta} \rtimes_A \Z) &\cong \Z \oplus \Z \oplus \Z_{h_1} \oplus \Z_{h_2},
	\end{align*}
	and a set of generators for $K_0(\mathcal{A}_{\theta}\rtimes_A\Z)$ is given by $[\unit]_0$ and $[\iota(p_\theta)]_0$, where $\iota:\ca_\theta\to \ca_\theta\rtimes_A \Z$ is the inclusion map and $p_\theta$ is the Rieffel projection associated to $\theta$.
	
	Moreover, given any tracial state $\tau$ on $\ca_\theta\rtimes_A \Z$ the induced map
	$$
	\tau_*: K_0(\mathcal{A}_{\theta}\rtimes_A\Z)\rightarrow \R
	$$ satisfies $\tau_*( [\unit]_0  ) = 1$ and $\tau_*([\iota(p_\theta)]_0) = \theta$. In particular, all tracial states on $\ca_\theta\rtimes_A\Z$ induce the same map on $K_0(\mathcal{A}_{\theta}\rtimes_A\Z)$. 
\end{thmx}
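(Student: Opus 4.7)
The plan is to run the Pimsner--Voiculescu six-term exact sequence for the crossed product $\mathcal{A}_\theta \rtimes_A \Z$. The inputs one has from the rotation algebra itself are that $K_0(\mathcal{A}_\theta) \cong \Z^2$ with generators $[\unit]_0$ and $[p_\theta]_0$, while $K_1(\mathcal{A}_\theta) \cong \Z^2$ with generators $[U_1]_1$ and $[U_2]_1$. Reading off the defining formulas for $\alpha_A$ (the scalar phases $e^{\pi i(\cdot)\theta}$ disappear in $K_1$), one sees that $\alpha_{A*}$ acts on $K_1(\mathcal{A}_\theta)$ by the matrix $A$.

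The main conceptual step is to establish that $\alpha_{A*} = \mathrm{id}$ on $K_0(\mathcal{A}_\theta)$ for \emph{every} $\theta$. For irrational $\theta$ this is automatic from the uniqueness of the trace combined with $K_0(\mathcal{A}_\theta) \cong \Z + \theta\Z$; for rational $\theta$ both generators must be handled carefully. Following the strategy advertised in the introduction, the plan is to identify $\mathcal{A}_\theta$ with the twisted group $C^*$-algebra $C^*(\Z^2, \omega_\theta)$ and assemble the family $\{\mathcal{A}_t\}_{t \in [0,1]}$ into a continuous field of twisted group algebras. The homotopy-invariance machinery of Echterhoff--L\"uck--Phillips--Walters \cite{ELPW10} then yields a canonical isomorphism $K_0(\mathcal{A}_\theta) \cong K_0(\mathcal{A}_0) = K_0(C(\T^2))$ that intertwines the $\alpha_A$-action. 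On the commutative fiber the action is pullback by a linear toral automorphism, and its effect on $K_0(C(\T^2)) \cong \Z^2$ is trivial because rank is preserved and the first Chern class is multiplied by $\det A = 1$. Transporting back along the continuous field gives $\alpha_{A*} = \id$ on $K_0(\mathcal{A}_\theta)$.

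With these ingredients the Pimsner--Voiculescu sequence collapses to
\[
0 \longrightarrow K_0(\mathcal{A}_\theta) \xrightarrow{\iota_*} K_0(\mathcal{A}_\theta \rtimes_A \Z) \xrightarrow{\partial} K_1(\mathcal{A}_\theta) \xrightarrow{I_2 - A} K_1(\mathcal{A}_\theta) \longrightarrow K_1(\mathcal{A}_\theta \rtimes_A \Z) \longrightarrow K_0(\mathcal{A}_\theta) \longrightarrow 0.
\]
Since $\tr(A) \neq 2$, $\det(I_2 - A) = 2 - \tr(A) \neq 0$, so $I_2 - A$ is injective on $K_1(\mathcal{A}_\theta)$. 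This forces $\partial = 0$, hence $\iota_*$ is an isomorphism onto $K_0(\mathcal{A}_\theta \rtimes_A \Z) \cong \Z^2$, with generators $[\unit]_0$ and $[\iota(p_\theta)]_0$ as claimed. The tail of the sequence becomes a short exact sequence
\[
0 \longrightarrow \mathrm{coker}(I_2 - A) \longrightarrow K_1(\mathcal{A}_\theta \rtimes_A \Z) \longrightarrow \Z^2 \longrightarrow 0,
\]
which splits because $\Z^2$ is free. Writing $I_2 - A^{-1} = -A^{-1}(I_2 - A)$ shows that $I_2 - A$ and $I_2 - A^{-1}$ differ by the unimodular factor $-A^{-1} \in \SL_2(\Z)$ and hence share the same Smith normal form $\diag(h_1, h_2)$, giving $K_1(\mathcal{A}_\theta \rtimes_A \Z) \cong \Z^2 \oplus \Z_{h_1} \oplus \Z_{h_2}$.

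Finally, any tracial state $\tau$ on $\mathcal{A}_\theta \rtimes_A \Z$ restricts to an $\alpha_A$-invariant tracial state on $\mathcal{A}_\theta$, and $\tau_*([\unit]_0) = 1$ is immediate. To get $\tau_*([\iota(p_\theta)]_0) = \theta$ it suffices to show that \emph{every} tracial state on $\mathcal{A}_\theta$ takes the value $\theta$ on $p_\theta$. For irrational $\theta$ this is Rieffel's original computation for the unique trace; for rational $\theta = p/q$ one realizes $\mathcal{A}_\theta$ as the continuous sections of an $M_q$-bundle over $\T^2$ so that every trace has the form $\tau_\mu(a) = \int_{\T^2} \tfrac{1}{q}\tr_q(a)\,d\mu$. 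Since the fiberwise rank of the projection $p_\theta$ is continuous, constant on the connected space $\T^2$, and equal to $p$ at a single point, every such trace returns $p/q = \theta$. The main obstacle in the plan is the homotopy-invariance argument in the second paragraph; once $\alpha_{A*} = \id$ on $K_0(\mathcal{A}_\theta)$ is secured, everything else is routine bookkeeping within the Pimsner--Voiculescu sequence.
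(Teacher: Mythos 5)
Your proposal is correct, and it reaches the theorem by a genuinely different (though closely related) route. The paper's Proposition \ref{prop:injectivity} runs the continuous-field/evaluation-map argument of \cite{ELPW10} on \emph{both} fields simultaneously --- $C([0,1])\rtimes_{\Omega,r}\Z^2$ and $C([0,1])\rtimes_{\tilde\Omega,r}(\Z^2\rtimes_A\Z)$ --- anchoring at an \emph{irrational} fiber where $\iota_*$ is already known to be an isomorphism from \cite{BCHL18}; it then deduces $\alpha_{A*}=\id$ on $K_0$ as a corollary via Pimsner--Voiculescu. You invert this logic: you run the field argument only on the family of rotation algebras, anchor at the commutative fiber $\theta=0$, compute there that the toral automorphism acts trivially on $K_0(C(\T^2))$ because $\det A=1$, transport back to get $\alpha_{A*}=\id$ on $K_0(\ca_\theta)$, and only then feed this into Pimsner--Voiculescu to get that $\iota_*$ is an isomorphism. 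Your version buys some economy: it needs Baum--Connes only for $\Z^2$ rather than for $\Z^2\rtimes_A\Z$, and it does not import the irrational-case result of \cite{BCHL18} as a black box. For the tracial statement you likewise replace the paper's appeal to Elliott's lemma by a direct argument via the $M_q$-bundle realization of $\ca_{p/q}$; this is sound, with the one caveat that for $\theta=0$ the class $[p_0]_0$ is a formal difference in $M_2(\ca_0)$ rather than an honest projection, so the ``constant fiberwise rank'' argument should be phrased for $K_0$-classes (every trace on $K_0(C(\T^2))$ is determined by rank, which settles that case). Two cosmetic points: the map in your exact sequence is $\id-\alpha_{*1}^{-1}=I_2-A^{-1}$ rather than $I_2-A$, but as you note these differ by the unimodular factor $-A^{-1}$ and so have the same Smith normal form and cokernel; and the splitting of the $K_1$-extension by freeness of $\Z^2$ matches the paper's computation. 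No gaps of substance.
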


Many results in this note are probably known to experts. Nevertheless it is our hope that our approach and explicit computations make at least some of them more transparent. 

\ \newline
{\bf Acknowledgments}. S. Chakraborty thanks Debashish Goswami for financial support (J C Bose fellowship).


\section{Twisted group $C^*$-algebras}


For our purposes it will be convenient to view both $\ca_\theta$ and $\ca_\theta \rtimes_A \Z$ as twisted group $C^*$-algebras. Here we briefly recall these identifications, and refer the reader to \cite{PR89} and \cite{ELPW10} for more details and other general facts about twisted group $C^*$-algebras. Let $\omega_\theta:\Z^2\times \Z^2 \to \T$ be the $2$-cocycle defined by
$$
\omega_\theta\left( x,y \right) = e^{-\pi i \ip{  \theta x, y }},
$$
where we identify $\theta$ with the $2\times 2$ skew-symmetric matrix $\begin{pmatrix}
0 & -\theta \\ \theta & 0
\end{pmatrix}$. More explicitly, given two elements $x = \begin{pmatrix}
x_1 \\ x_2
\end{pmatrix}$ and $y = \begin{pmatrix}
y_1 \\ y_2
\end{pmatrix}$ in $\Z^2$ we have
$$
\omega_\theta(x,y) = e^{ \pi i \theta( x_2 y_1 - x_1y_2  ) }.
$$
The (full) twisted group $C^*$-algebra $C^*(\Z^2, \omega_{\theta})$ is then a $C^*$-completion of the weighted convolution algebra $\ell^1(\Z^2, \omega_{\theta})$. Using the universal property one checks that there is a $*$-isomorphism
$$
\ca_\theta \xrightarrow{ \cong } C^*(\Z^2, \omega_\theta)
$$
sending each generator $U_i$ to the point mass function $\delta_{e_i}$, where $\{e_1, e_2\}$ is the standard basis for $\Z^2$.

Now consider the action of $\Z$ on $\Z^2$ given by multiplication by the matrix $A$, and write $\Z^2\rtimes_A\Z$ for the resulting semidirect product. Define a $2$-cocycle $\tilde{\omega}_\theta$ on $\Z^2\rtimes_A\Z$  by
$$
\tilde{\omega}_\theta( (x, n), (y, m)  ) = \omega_\theta( x, n.y ).
$$
Then by \cite[Lemma 2.1]{ELPW10} there is an action $\alpha:\Z\to \Aut(C^*(\Z^2, \omega_\theta ))$ generated by the automorphism
$$
\hspace{5cm} [\alpha(f)](x) := f(A^{-1}x) \qquad \qquad (f\in \ell^1(\Z^2), x\in \Z^2)
$$
so that there is a $*$-isomorphism
$$C^*(\Z^2\rtimes_A \Z, \tilde{\omega}_\theta  ) \xrightarrow{\cong} C^*(\Z^2, \omega_{\theta})\rtimes_\alpha \Z.
$$
More concretely, the isomorphism, when described at the level of convolution algebras, is given by
$$
\ell^1(\Z^2\rtimes_A \Z, \tilde{\omega})\to \ell^1( \Z, \ell^1(\Z^2, \omega_{\theta})  ), \qquad f\mapsto \Phi(f),
$$
where $[\Phi(f)](n) = f(\cdot, n)$ for all $n\in \Z$. A direct computation shows that under the identification $\ca_\theta \cong C^*(\Z^2, \omega_\theta)$ the crossed product $C^*(\Z^2, \omega_{\theta})\rtimes_\alpha \Z$ is precisely $\ca_\theta\rtimes_A \Z$ (see \cite[page 185]{ELPW10}). Finally, as all of our groups are amenable, we are free to switch between the full and reduced versions of the crossed products and twisted group $C^*$-algebras \cite[Theorem 3.11]{PR89}.

In what follows we recall a sufficient condition at the level of $2$-cocycles that ensures the resulting twisted group $C^*$-algebras are isomorphic.

\begin{defn}
	Let $G$ be a locally compact group, and let $\omega, \omega'$ be continuous 2-cocycles on $G$. We say $\omega$ and $\omega'$ are \emph{cohomologous}, written as $\omega\sim_{\mathrm{cohom}} \omega'$ if there exists a continuous map $\lambda:G\to \T$ such that
	$$
	\omega(s,t) = \lambda(s)\lambda(t)\overline{\lambda(st)}\omega'(s,t).
	$$
	for all $s,t\in G$.
\end{defn}

Given a continuous $2$-cocycle $\omega$ on a locally compact group $G$ and an automorphism $\Phi\in\Aut(G)$, we define $\omega\circ \Phi: G\times G\to \T$ by
$$
\omega\circ \Phi(s,t) := \omega(\Phi(s),\Phi(t)).
$$
A computation shows that $\omega\circ \Phi$ is also a continuous $2$-cocycle on $G$.

\begin{prop} \label{prop:cohom}
	Let $\omega$ and $\omega'$ be $2$-cocycles on $G$. If there is an automorphism $\Phi\in \Aut(G)$ such that $(\omega \circ \Phi) \sim_{\mathrm{cohom}}\omega'$, then $C^*(G, \omega) \cong C^*(G,\omega')$ and $C^*_r(G, \omega)\cong C^*_r(G, \omega')$.
\end{prop}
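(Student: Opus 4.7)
The plan is to reduce the statement to two separate isomorphism results and then compose. Writing $\omega'' := \omega \circ \Phi$, it suffices to prove
\[
C^*(G,\omega) \cong C^*(G,\omega'') \quad \text{and} \quad C^*(G,\omega'') \cong C^*(G,\omega'),
\]
with matching statements on the reduced side. The first isomorphism exploits the automorphism $\Phi$, while the second uses only the cohomology relation.

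For the isomorphism coming from the automorphism, I would define a map on convolution algebras by
\[
W : \ell^1(G,\omega'') \to \ell^1(G,\omega), \qquad (Wf)(s) = f(\Phi^{-1}(s)),
\]
after possibly rescaling by the module of $\Phi$ so that the change-of-variables in the Haar integral is accounted for (for the discrete groups of interest in this paper this is automatic). A direct computation using the substitution $s = \Phi(u)$, $t = \Phi(v)$ shows that $W$ intertwines the $\omega''$- and $\omega$-twisted convolutions and involutions; the cocycle condition $\omega''(u,v)=\omega(\Phi(u),\Phi(v))$ is exactly what is needed. Extension to the full twisted $C^*$-algebras then follows by universality, and extension to the reduced twisted $C^*$-algebras follows by transporting the left regular $\omega''$-representation to the left regular $\omega$-representation via $W$.

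For the cohomological isomorphism, suppose $\lambda : G \to \T$ implements $\omega'' \sim_{\mathrm{cohom}} \omega'$, i.e.
\[
\omega''(s,t) = \lambda(s)\lambda(t)\overline{\lambda(st)}\,\omega'(s,t).
\]
Define $V : \ell^1(G,\omega'') \to \ell^1(G,\omega')$ by $(Vf)(s) = \overline{\lambda(s)}\,f(s)$. A short check using the cohomology identity (applied with $s$ and $s^{-1}t$, noting that $s \cdot (s^{-1}t)=t$) shows that $V$ preserves twisted convolution and involution. Since $|\lambda|=1$, the map $V$ is an isometric bijection on $\ell^1$. One then extends $V$ to the full twisted group $C^*$-algebras by the universal property, and to the reduced ones by observing that pointwise multiplication by $\overline{\lambda}$ extends to a unitary on $\ell^2(G)$ that intertwines the two left regular twisted representations.

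Composing the two isomorphisms yields the result. The potential obstacle is the Haar-measure rescaling when $\Phi$ is not measure-preserving, but this is a routine normalization that does not affect the $C^*$-isomorphism class; for the applications in this paper (where $G = \Z^2 \rtimes_A \Z$ is discrete) it does not arise at all. All of the steps are well known, and the proposition is essentially a combination of \cite[Proposition 3.1]{PR89} (cohomologous cocycles) with the functoriality of $C^*(G,\omega)$ in the pair $(G,\omega)$.
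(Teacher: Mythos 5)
Your proof is correct and follows essentially the same route as the paper: factor the isomorphism as the standard cohomologous-cocycle isomorphism composed with the isomorphism induced by $f \mapsto f \circ \Phi$, checked on the convolution algebra and extended to the full and reduced completions. The only quibble is that, with the paper's convention for $\sim_{\mathrm{cohom}}$, multiplication by $\overline{\lambda}$ intertwines the convolutions in the direction $\ell^1(G,\omega') \to \ell^1(G,\omega'')$ rather than the direction you wrote (use $\lambda$ for the other direction), which is immaterial since either gives the claimed isomorphism.
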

\begin{proof}
It is a well-known fact that cohomologous cocycles yield isomorphic twisted group $C^*$-algebras. To complete the proof, one checks that the map $C^*_r(G,\omega)\rightarrow C^*_r(G,\omega\circ \Phi)$ determined by the assignment $f\mapsto f\circ \Phi$ is a $\ast$-isomorphism.
\end{proof}


\section{$K$-theory}

In this section we compute the $K$-theory of the crossed product $\ca_\theta\rtimes_A \Z$, where $A\in \SL_2(\Z)$ is a matrix of infinite order (i.e., $A^n \neq I_2$ for any $n\in \N$). In the case that $\theta$ is irrational, the $K$-theory has been computed in \cite{BCHL18} using the Pimsner-Voiculescu sequence. One crucial ingredient used there is that the automorphism $\alpha_A$ on $\ca_\theta$ induces the identity map at the level of $K_0$ (in fact, this is true for any automorphism of $\ca_\theta$). When $\theta$ is irrational, this is easily seen by examining the image of $K_0$ under the unique tracial state. The same result for rational $\theta$ seems to be known to experts, though we were not able to locate a reference. Here we provide an argument based on homotopies between cocycles, as developed in \cite{ELPW10}.

Let $G$ be a discrete group and $\Omega$ be a $C([0,1],\T)$-valued $2$-cocycle. One can define the twisted crossed product $C^*$-algebra $C([0,1])\rtimes_{ \Omega, r } G$ as in the case of twisted group $C^*$-algebras (see \cite{PR89}). Here the underlying convolution algebra is the algebra of $\ell^1$ functions on $G$ with values in $C([0,1])$. Given any $x\in [0,1]$, the function 
$$
\omega_x := \Omega(\cdot, \cdot)(x)
$$
is a $\T$-valued cocycle on $G$. There is a canonical quotient map (called the \emph{evaluation map})
$$
\mathrm{ev}_x: C([0,1])\rtimes_{ \Omega, r} G\to C^*_r(G, \omega_x)
$$
such that for each function $f\in \ell^1(G, C([0,1]))$ and each $s\in G$ we have $[\mathrm{ev}_x(f)](s) = [f(s)](x)$.

\begin{thm} \cite[Corollary 1.11]{ELPW10} \label{thm:ev}
	If $G$ satisfies the Baum-Connes conjecture with coefficients, then the evaluation map $ev_x$ induces an isomorphism on $K$-theory.
\end{thm}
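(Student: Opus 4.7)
The plan is to reduce the statement to the $G$-equivariant $KK$-equivalence $C([0,1],\mathcal{K})\sim_{KK^G}\mathcal{K}$ via the Packer--Raeburn stabilization trick, and then conclude using the naturality of the Baum--Connes assembly map together with the homotopy invariance of topological $K$-theory under $G$-equivariant $KK$-equivalences.

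First I would untwist both sides by Packer--Raeburn. Since $\Omega$ is a continuous $C([0,1],\T)$-valued cocycle, tensoring with the compact operators produces a genuine $G$-action $\beta$ on $C([0,1],\mathcal{K})$ together with genuine $G$-actions $\beta_x$ on $\mathcal{K}$ satisfying $(C([0,1])\rtimes_{\Omega,r}G)\otimes\mathcal{K}\cong C([0,1],\mathcal{K})\rtimes_{\beta,r}G$ and $C^*_r(G,\omega_x)\otimes\mathcal{K}\cong \mathcal{K}\rtimes_{\beta_x,r}G$. Under these identifications the stabilized evaluation map is the crossed product of the $G$-equivariant quotient $\mathrm{ev}_x\colon (C([0,1],\mathcal{K}),\beta)\to (\mathcal{K},\beta_x)$. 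Because tensoring with $\mathcal{K}$ does not affect $K$-theory, it suffices to show this stabilized map is a $K$-theory isomorphism.

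Next I would invoke the naturality of the Baum--Connes assembly map $\mu_A^G\colon K^{\mathrm{top}}_*(G;A)\to K_*(A\rtimes_r G)$ applied to the $G$-equivariant $*$-homomorphism above. This yields a commutative diagram
\[
\begin{CD}
K^{\mathrm{top}}_*(G; C([0,1],\mathcal{K})) @>>> K^{\mathrm{top}}_*(G; \mathcal{K})\\
@V{\mu}VV @VV{\mu}V\\
K_*(C([0,1],\mathcal{K})\rtimes_{\beta,r}G) @>>> K_*(\mathcal{K}\rtimes_{\beta_x,r}G)
\end{CD}
\]
whose vertical arrows are isomorphisms by the Baum--Connes hypothesis. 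The problem therefore reduces to showing that the top arrow is an isomorphism, i.e.\ that $\mathrm{ev}_x$ is a $KK^G$-equivalence.

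The main obstacle is precisely this last reduction. The naive non-equivariant homotopy inverse---inclusion of constant functions at $x$---need not be $G$-equivariant, because $\beta$ twists the $C([0,1])$-direction nontrivially via the varying family $\omega_t$. The remedy is to exploit the continuity of $\Omega$ in $t$: the $G$-algebra $(C([0,1],\mathcal{K}),\beta)$ is a $G$-equivariant continuous field whose fibers are the $(\mathcal{K},\beta_t)$, and one can build a $KK^G$-inverse to $\mathrm{ev}_x$ either from a continuous path of equivariant unitaries extracted from $\Omega$, or, equivalently, via an equivariant mapping-cylinder construction over $(\mathcal{K},\beta_x)$. With this $KK^G$-equivalence in hand, the functoriality and homotopy invariance of $K^{\mathrm{top}}_*(G;\cdot)$ under $G$-equivariant $KK$-equivalences close the argument.
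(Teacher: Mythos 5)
The paper does not prove this statement: it is quoted directly from \cite[Corollary 1.11]{ELPW10}, so your attempt has to be measured against the argument given there. Your first two steps do match that argument: the Packer--Raeburn stabilization trick turning the twisted crossed products into genuine crossed products $C([0,1],\mathcal{K})\rtimes_{\beta,r}G$ and $\mathcal{K}\rtimes_{\beta_x,r}G$, and the naturality square for the assembly map reducing everything to the arrow $K^{\mathrm{top}}_*(G;C([0,1],\mathcal{K}))\to K^{\mathrm{top}}_*(G;\mathcal{K})$, are exactly how \cite{ELPW10} begin.

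The final step, however, contains a genuine gap, located exactly where you say the ``main obstacle'' is. The claim that $\mathrm{ev}_x$ is a $KK^G$-equivalence is not delivered by either device you propose, and it is almost certainly too strong. A ``continuous path of equivariant unitaries extracted from $\Omega$'' would amount to a continuous exterior equivalence trivializing the field $(C([0,1],\mathcal{K}),\beta)$ over $(\mathcal{K},\beta_x)$; this would force all fibres $C^*_r(G,\omega_t)$ to be mutually isomorphic, which already fails for $G=\Z^2$, where the fibres are the rotation algebras $\mathcal{A}_t$ (commutative at $t=0$, simple at irrational $t$). The equivariant mapping cylinder fails for the same reason: its retraction onto the base is not $\beta$-equivariant. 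More structurally, if $\mathrm{ev}_x$ were a $KK^G$-equivalence, Kasparov descent would make $\mathrm{ev}_x\rtimes_r G$ a $KK$-equivalence outright and the Baum--Connes hypothesis would be superfluous; since that hypothesis is essential to the theorem, your key step is at least as hard as the theorem itself. What \cite{ELPW10} actually use is the going-down principle of Chabert--Echterhoff--Oyono-Oyono: $K^{\mathrm{top}}_*(G;-)$ is assembled from the groups $K_*(A\rtimes H)$ over the finite subgroups $H\le G$ (via a proper $G$-CW model for the classifying space), so it suffices to check that $\mathrm{ev}_x\rtimes H$ is a $K$-theory isomorphism for every finite $H\le G$. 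That in turn holds because $H^2(H,\T)$ is finite for finite $H$, so the restricted path of cocycle classes is locally constant and the field trivializes over $H$; for the torsion-free groups used in this paper only $H=\{e\}$ occurs and the fibrewise statement is just $K_*(C([0,1]))\cong K_*(\C)$. This finite-subgroup reduction is the idea missing from your argument.
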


In principle, this theorem allows us to transfer $K$-theoretical properties from a single fiber to \emph{all} the other fibers. In our case, we make use of our knowledge for irrational $\theta$ and conclude similar properties for the rational ones. As in \cite{BCHL18} we first treat the case when $\tr(A) \neq 2$.

\begin{prop} \label{prop:injectivity}
	Let $\theta$ be any real number in $[0,1]$ and $A\in \SL_2(\Z)$ be a matrix of infinite order with $\tr(A)\notin \{0, \pm 1, 2\}$. Then the inclusion $\ca_\theta\to \ca_\theta\rtimes_A \Z$ induces an isomorphism between $K_0(\ca_\theta)$ and $K_0(\ca_\theta\rtimes_A \Z)$.
	
	As a consequence, the automorphism $\alpha_A$ induces the identity map on $K_0(\ca_\theta)$.
\end{prop}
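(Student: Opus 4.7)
The plan is to transfer the result from the irrational case, proved in \cite{BCHL18}, to arbitrary $\theta$ by means of a homotopy of cocycles, using the fiberwise $K$-theory invariance encoded in Theorem \ref{thm:ev}. The key observation is that Theorem \ref{thm:ev} applies not only at the level of the algebras $\ca_\theta\rtimes_A\Z$ themselves, but also to the subalgebras corresponding to the rotation algebras, and that the inclusion of one inside the other assembles into a $*$-homomorphism of continuous fields.

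Concretely, I would first pick a continuous path $s\mapsto \theta_s$ with $\theta_0=\theta$ and $\theta_1=\theta'$ irrational (for example $\theta_s=(1-s)\theta+s\theta'$). This path packages into a $C([0,1],\T)$-valued $2$-cocycle $\Omega$ on $\Z^2$ via $\Omega(x,y)(s):=\omega_{\theta_s}(x,y)$, and into its extension $\tilde\Omega$ on $\Z^2\rtimes_A\Z$ via $\tilde\Omega((x,n),(y,m))(s):=\omega_{\theta_s}(x,A^n y)$. I then form
\begin{equation*}
\mathcal{M}:=C([0,1])\rtimes_{\Omega,r}\Z^2,\qquad \mathcal{N}:=C([0,1])\rtimes_{\tilde\Omega,r}(\Z^2\rtimes_A\Z).
\end{equation*}
Since the inclusion $\Z^2\hookrightarrow \Z^2\rtimes_A\Z$, $x\mapsto(x,0)$, pulls $\tilde\Omega$ back to $\Omega$ (because $A^0=I_2$), it induces a natural $*$-homomorphism $J:\mathcal{M}\to\mathcal{N}$. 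A direct check on convolution algebras shows that, for every $s\in[0,1]$, the square
\begin{equation*}
\begin{CD}
\mathcal{M} @>{\mathrm{ev}_s}>> \ca_{\theta_s}\\
@VV{J}V @VV{\iota}V \\
\mathcal{N} @>{\mathrm{ev}_s}>> \ca_{\theta_s}\rtimes_A\Z
\end{CD}
\end{equation*}
commutes.

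Both $\Z^2$ and the polycyclic group $\Z^2\rtimes_A\Z$ are amenable, so they satisfy the Baum--Connes conjecture with coefficients. Hence Theorem \ref{thm:ev} tells me that all four evaluation maps above are $K$-theory isomorphisms. Consequently $\iota_*:K_0(\ca_{\theta_s})\to K_0(\ca_{\theta_s}\rtimes_A\Z)$ is conjugate, via these isomorphisms, to the fixed map $J_*:K_0(\mathcal{M})\to K_0(\mathcal{N})$, \emph{independently of $s$}. At $s=1$, the angle $\theta'$ is irrational and the result of \cite{BCHL18} (whose hypothesis $\tr(A)\neq 2$ ensures that the Pimsner--Voiculescu boundary has trivial cokernel on $K_1$) shows that $\iota_*$ is an isomorphism. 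Transporting this conclusion back to $s=0$ gives the proposition.

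For the consequence, I would apply the Pimsner--Voiculescu six-term exact sequence to $(\ca_\theta,\alpha_A)$, whose relevant fragment
\begin{equation*}
K_0(\ca_\theta)\xrightarrow{\ \id-(\alpha_A^{-1})_*\ }K_0(\ca_\theta)\xrightarrow{\ \iota_*\ }K_0(\ca_\theta\rtimes_A\Z)
\end{equation*}
is exact. Injectivity of $\iota_*$ forces $\id=(\alpha_A^{-1})_*$, hence $(\alpha_A)_*=\id$ on $K_0(\ca_\theta)$. The main technical content of the proof lives entirely in the identification of the inclusion $\iota$ with a map of continuous fields to which the ELPW invariance theorem applies; verifying compatibility of $J$ with the cocycles and the evaluations is routine, and everything else is formal.
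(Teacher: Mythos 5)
Your proof is correct and follows essentially the same route as the paper: a continuous field of twisted group $C^*$-algebras over an interval of angles, the ELPW evaluation theorem (via Baum--Connes for the amenable groups $\Z^2$ and $\Z^2\rtimes_A\Z$) to conjugate $\iota_*$ at every fiber to a single map of the fields, transfer from the irrational case of \cite{BCHL18}, and the Pimsner--Voiculescu sequence for the statement about $(\alpha_A)_*$. The only cosmetic differences are that you parametrize by a path $\theta_s$ from $\theta$ to an irrational angle rather than using the tautological field over all of $[0,1]$, and you build the inclusion $J$ directly from the group inclusion $\Z^2\hookrightarrow\Z^2\rtimes_A\Z$ rather than through the identification of the big field as a crossed product by $\Z$; these yield the same map.
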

\begin{proof}
	Let $\Omega$ be the $C([0,1],\T)$-valued cocycle on $\Z^2$ defined by
	$$
	\Omega( \cdot, \cdot   )(\theta) := \omega_\theta
	$$
	and let $\tilde{\Omega}$ be the $C([0,1],\T)$-valued cocycle on $\Z^2\rtimes_A \Z$ defined using $\tilde{\omega}_\theta$ in the analogous way. Since the groups $\Z^2$ and $\Z^2\rtimes_A \Z$ satisfy the Baum-Connes conjecture with coefficients \cite{Hig01}, by Theorem \ref{thm:ev} both evaluation maps
	$$
	\mathrm{ev}_\theta: C([0,1])\rtimes_{ \Omega, r} \Z^2\to C^*_r(\Z^2, \omega_\theta),
	$$
	$$
	\mathrm{ev}_\theta: C([0,1])\rtimes_{ \tilde{\Omega}, r} (\Z^2 \rtimes_A \Z) \to C^*_r(\Z^2 \rtimes_A \Z, \tilde{\omega}_\theta)
	$$
	induce isomorphisms at the level of $K_0$-groups. As in the case of twisted group $C^*$-algebras, there is an identification
	$$
	C([0,1])\rtimes_{ \tilde{\Omega}, r} (\Z^2\rtimes_A \Z) \xrightarrow{\cong} (  C([0,1])\rtimes_{ \Omega, r} \Z^2    )\rtimes_\alpha \Z
	$$
	that respects the evaluation maps (see \cite[Remark 2.3]{ELPW10}). Therefore we obtain a commutative diagram
	\begin{center}
	\begin{diagram}[!ht]
	\begin{tikzpicture}[node distance=2cm, auto]
		\node (1) {$C([0,1])\rtimes_{ \Omega, r} \Z^2$};
		\node (2) [right of=1, node distance=5cm] {$C([0,1])\rtimes_{ \tilde{\Omega}, r} (\Z^2\rtimes_A \Z)$};
		\node (3) [below of=1] {$C^*(\Z^2, \omega_{\theta})$};
		\node (4) [below of=2] {$C^*(\Z^2\rtimes_A \Z, \tilde{\omega}_\theta) $};
		\node (5) [below of=3] {$\ca_\theta$};
		\node (6) [below of=4] {$\ca_\theta\rtimes_A \Z,$};
		\draw[->] (1) to node {$\tilde{\iota}$} (2);
		\draw[->] [swap] (1) to node {$\mathrm{ev}_\theta$} (3);
		\draw[->] (2) to node {$\mathrm{ev}_\theta$} (4);
		\draw[->] (3) to node {$\iota'$} (4);
		\draw[->] (5) to node {$\iota$} (6);
		\draw[->] (3) to node {$\cong$} (5);
		\draw[->] (4) to node {$\cong$} (6);
		\end{tikzpicture},
		\caption{}
		\label{dia:commutative}
\end{diagram}
		\end{center}
	where the horizontal maps are the canonical inclusions. Let us quickly check that the above diagram is indeed commutative. Let $f$ be a function in $\ell^1(\Z^2, C([0,1]) \subseteq C([0,1])\rtimes_{ \Omega, r} \Z^2$. The image $\tilde{\iota}(f)$ is determined by the formula
	$$
	\tilde{\iota}(f)( (s,t),x)=f(s,x)
	$$
	for $s\in \Z^2, t\in \Z$, and $x\in [0,1]$. By the definition of the evaluation maps, we have  
	$$
	[\mathrm{ev}_\theta(f)](s) = f(s,\theta)
	$$
	and 
	$$
	[\mathrm{ev}_\theta({\tilde{\iota}}(f)](s,t)=\tilde \iota(f)((s,t),\theta) = f(s, \theta).
	$$
	By definition $\iota'(h)(s,t) = h(s)$ for any $h \in \ell^1(\Z^2, \omega_{\theta})\subseteq C^*(\Z^2, \omega_{\theta})$, so we have $[\iota'(\mathrm{ev}_{\theta}(f))](s,t) = [\mathrm{ev}_{\theta}(f)](s) = f(s,\theta)$ and the upper rectangle is commutative. The commutativity of the lower rectangle follows from the explicit identifications discussed in Section 2.
	
	  At the level of $K_0$-groups we have
		\begin{center}
		\begin{tikzpicture}[node distance=2cm, auto]
		\node (1) {$K_0(C([0,1])\rtimes_{ \Omega, r} \Z^2)$};
		\node (2) [right of=1, node distance=6cm] {$K_0(C([0,1])\rtimes_{ \tilde{\Omega}, r} (\Z^2\rtimes_A \Z))$};
		\node (3) [below of=1] {$K_0(\ca_\theta  )$};
		\node (4) [below of=2] {$K_0(\ca_\theta \rtimes_A \Z)$};
		\draw[->] (1) to node {$\tilde{\iota}_*$} (2);
		\draw[->] [swap] (1) to node {$(\mathrm{ev}_\theta)_*$} (3);
		\draw[->] (2) to node {$(\mathrm{ev}_\theta)_*$} (4);
		\draw[->] (3) to node {$\iota_*$} (4);
		\end{tikzpicture}.
	\end{center}
	From \cite[Proposition~3.2]{BCHL18} we know that the map $\iota_*$ is an isomorphism when $\theta$ is irrational. As both vertical maps are isomorphisms, so is the map $\tilde{\iota}_*$ at the top. Now the same commutative diagram shows that $\iota_*$ is an isomorphism for any real number $\theta$.
	
	The second statement follows from the Pimsner-Voiculescu exact sequence (see, for example, \cite[Chapter V]{Bla98}). 
\end{proof}

To compute the tracial ranges, let us briefly recall some facts about (the $K$-theory of) $\mathcal{A}_\theta$. For any $\theta$ one can realize $\mathcal{A}_\theta$ as a crossed product $C(\T)\rtimes_{r_\theta} \Z$, where $\Z$ acts on the circle $\T$ by rotations. There is a canonical tracial state $\tau_\theta$ on $\mathcal{A}_\theta$ given by integration. More precisely, if we write $E: C(\T)\rtimes_{r_\theta} \Z\to C(\T)$ for the canonical conditional expectation, the $\tau_\theta$ is given by
$$
\tau_\theta( a  ) := \int_0^{2\pi} [E(a)](x) \;dx.
$$
For any $\theta$ there is a \emph{Rieffel projection} $[p_\theta]_0$ in the $K_0$-group satisfying $(\tau_\theta)_*([p_\theta]_0) = \theta$. If $\theta \neq 0$ then the projection $p_\theta$ was constructed by Rieffel \cite{Rie81}. In the case $\theta = 0$ one has to take $p_\theta$ in the amplification $M_2(\mathcal{A}_\theta)$, and we refer the reader to \cite{Thesis} for details. In either case, the group $K_0(\mathcal{A}_\theta)$ is generated by the elements $[\unit]_0$ and $[p_\theta]_0$.

\begin{rem}
	In Rieffel's original paper the angle $\theta$ was assumed to be irrational, but the construction and the proof in fact works for any nonzero $\theta$.
\end{rem}

We need one more fact about how the tracial states on $\mathcal{A}_\theta$ behave at the $K_0$-level. The following lemma is a special case of \cite[Lemma 2.3]{Ell80}.

\begin{lem} [Elliott]
	Let $\theta$ be any real number in $[0,1)$. Then all tracial states on $\mathcal{A}_\theta$ induce the same map on $K_0(\mathcal{A}_\theta)$.
\end{lem}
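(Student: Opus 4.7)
The plan is to handle the irrational and rational cases of $\theta$ separately. When $\theta$ is irrational, $\ca_\theta$ is simple with a unique tracial state, so the statement is vacuous; the substantive case is $\theta = p/q$ rational in lowest terms (with the convention $q=1$ when $\theta=0$). The strategy there is to identify $\ca_{p/q}$ with a homogeneous $C^*$-algebra and exploit the resulting concrete description of its tracial states.

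First I would recall the classical structural fact that $\ca_{p/q}$ is $q^2$-homogeneous: the unitaries $U_1^q$ and $U_2^q$ are central, they generate the full centre of $\ca_{p/q}$, and this centre is isomorphic to $C(\T^2)$; moreover every irreducible representation of $\ca_{p/q}$ has dimension $q$ and is indexed by a point of $\T^2$, realising $\ca_{p/q}$ as the algebra of continuous sections of a locally trivial $M_q$-bundle over $\T^2$. From this it follows by the standard argument with the fibrewise normalised trace (which furnishes a conditional expectation onto the centre) that every tracial state on $\ca_{p/q}$ has the form $\tau(a) = \int_{\T^2} \mathrm{tr}_q(\pi_x(a))\,d\mu(x)$, where $\pi_x$ denotes the irreducible representation at $x$, $\mathrm{tr}_q$ is the normalised trace on $M_q$, and $\mu$ is a Borel probability measure on $\T^2$.

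Now take any projection $P \in M_n(\ca_{p/q})$. The function $x \mapsto \mathrm{rank}((\pi_x \otimes \id_{M_n})(P))$ is continuous on the connected space $\T^2$ and takes values in the discrete set $\{0,1,\dots,nq\}$, hence is constant; call its value $r(P)$. Unwinding the definition of the induced map on $K_0$ (via the unnormalised trace on the matrix factor), one obtains $\tau_*([P]_0) = r(P)/q$, which is manifestly independent of $\mu$ and therefore of $\tau$. By additivity, $\tau_* \colon K_0(\ca_{p/q}) \to \R$ is the same homomorphism for every tracial state $\tau$, as desired.

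The only genuine input is the invocation of the homogeneous structure of $\ca_{p/q}$ together with the description of its tracial states as integrals of fibrewise normalised traces against probability measures on the primitive ideal space; once these standard facts about rational noncommutative tori (equivalently, twisted group $C^*$-algebras of $\Z^2$ with rational cocycle) are accepted, the remainder reduces to the elementary observation that an integer-valued continuous function on a connected space is constant.
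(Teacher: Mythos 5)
Your argument is correct, and it is genuinely different from what the paper does: the paper offers no proof at all, but simply cites this as a special case of Elliott's general result (\cite[Lemma 2.3]{Ell80}) on twisted group $C^*$-algebras of torsion-free abelian groups, with a remark explaining the specialization $G=\Z^2$, $G\wedge G\cong\Z$, $\rho = e^{2\pi i\theta}$. Your route instead splits into cases: for irrational $\theta$ the claim is immediate from uniqueness of the trace, and for $\theta=p/q$ you invoke the classical structure of the rational rotation algebra as the section algebra of a locally trivial $M_q$-bundle over $\T^2$ (with centre $C^*(U_1^q,U_2^q)\cong C(\T^2)$), so that every trace is an integral of the fibrewise normalised trace against a probability measure on $\T^2$; connectedness of $\T^2$ forces the fibrewise rank of any projection in $M_n(\ca_{p/q})$ to be a constant $r(P)$, whence $\tau_*([P]_0)=r(P)/q$ for every $\tau$. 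This is a complete and more self-contained argument for the case at hand, at the cost of relying on the (standard but nontrivial) homogeneity of $\ca_{p/q}$ and losing the uniform treatment of all $\theta$ that Elliott's lemma provides. One small slip: you call $\ca_{p/q}$ ``$q^2$-homogeneous'' before correctly stating that its irreducible representations are $q$-dimensional; the standard terminology is $q$-homogeneous, and all the subsequent computations correctly use $q$, so nothing downstream is affected.
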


\begin{rem}
	In \cite{Ell80} the result was stated in terms of a torsion-free abelian group $G$ and a character $\rho$ on the exterior power $G\wedge G$. In our case one takes the group $G$ to be $\Z^2$, so the exterior power $\Z^2\wedge \Z^2$ is isomorphic to $\Z$. The character $\rho$ is then identified with an element $\e^{2\pi i \theta}$ in the circle. Finally, one checks that the $C^*$-algebra $\mathcal{A}_\rho$ appearing in the lemma is nothing but the rotation algebra $\mathcal{A}_\theta$.
\end{rem}

The next theorem shows that, among other things, the crossed product $\ca_\theta\rtimes_A \Z$ has the same $K$-theory and tracial ranges for any real number $\theta$ in $[0,1]$. We refer the reader to \cite[Section 2.1]{BCHL18} for a discussion of matrix equivalence and Smith normal forms.

\begin{thm}\label{thm:K-theory}
	Let $\theta$ be any real number in $[0,1]$, and let $A\in \SL_2(\Z)$ be a matrix of infinite order with $\tr(A) \not\in\{0,\pm1,2\}$. Suppose the rank $2$ matrix $I_2-A^{-1}$  has the Smith normal form $\diag(h_1,h_2)$, then
	\begin{align*}
	K_0(\mathcal{A}_{\theta} \rtimes_A \Z) &\cong \Z\oplus \Z, \\
	K_1(\mathcal{A}_{\theta} \rtimes_A \Z) &\cong \Z \oplus \Z \oplus \Z_{h_1} \oplus \Z_{h_2},
	\end{align*}
	and a set of generators for $K_0$ is given by $[\unit]_0$ and $[\iota(p_\theta)]_0$. 
	
	Moreover, given any tracial state $\tau'$ on $\ca_\theta\rtimes_A \Z$ the induced map
	$$
	\tau'_*: K_0(\mathcal{A}_{\theta}\rtimes_A\Z)\rightarrow \R
	$$ satisfies $\tau'_*( [\unit]_0  ) = 1$ and $\tau'_*([\iota(p_\theta)]_0) = \theta$. In particular, all tracial states on $\ca_\theta\rtimes_A\Z$ induce the same map on $K_0(\mathcal{A}_{\theta}\rtimes_A\Z)$. 
\end{thm}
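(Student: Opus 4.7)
The plan is to feed the Pimsner--Voiculescu six-term exact sequence for $\ca_\theta\rtimes_A\Z$ with the two inputs already at hand: Proposition \ref{prop:injectivity}, which gives that $\iota_*$ is an isomorphism on $K_0$ (equivalently, $(\alpha_A)_* = \id$ on $K_0(\ca_\theta)$), and Elliott's lemma, which gives that all tracial states on $\ca_\theta$ agree on $K_0(\ca_\theta)$.

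First I would write out the PV sequence
\begin{equation*}
K_1(\ca_\theta)\xrightarrow{\id-(\alpha_A)_*} K_1(\ca_\theta)\xrightarrow{\iota_*} K_1(\ca_\theta\rtimes_A\Z)\xrightarrow{\partial} K_0(\ca_\theta)\xrightarrow{\id-(\alpha_A)_*} K_0(\ca_\theta)\xrightarrow{\iota_*} K_0(\ca_\theta\rtimes_A\Z)\xrightarrow{\partial} K_1(\ca_\theta),
\end{equation*}
and recall $K_i(\ca_\theta)\cong\Z^2$ for $i = 0,1$. Proposition \ref{prop:injectivity} makes $\iota_*$ an isomorphism on $K_0$; by exactness this forces $\id-(\alpha_A)_* = 0$ on $K_0$ and $\ker(\id - (\alpha_A)_*|_{K_1}) = 0$. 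To compute the cokernel on $K_1$ I would identify the matrix of $(\alpha_A)_*$ on the basis $\{[U_1]_1,[U_2]_1\}$: from $\alpha_A(U_i) = (\text{scalar})\cdot U_1^{a_i}U_2^{c_i}$, together with the null-homotopy of the scalar phase inside $\U(\ca_\theta)$ (which is independent of $\theta$), one reads off the matrix $A$. Since $\tr(A)\notin\{0,\pm 1, 2\}$ gives $\det(I_2 - A^{-1}) = 2 - \tr(A) \neq 0$, and since $I_2 - A$ and $I_2 - A^{-1}$ differ by a unimodular factor and therefore share the same Smith normal form $\diag(h_1,h_2)$, the cokernel of $\id - (\alpha_A)_*|_{K_1}$ is $\Z_{h_1}\oplus\Z_{h_2}$.

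Plugging this back into the PV sequence produces the short exact sequences
\begin{equation*}
0\to K_0(\ca_\theta)\xrightarrow{\iota_*} K_0(\ca_\theta\rtimes_A\Z)\to 0
\end{equation*}
and
\begin{equation*}
0\to\Z_{h_1}\oplus\Z_{h_2}\to K_1(\ca_\theta\rtimes_A\Z)\to K_0(\ca_\theta)\to 0,
\end{equation*}
the second of which splits because $K_0(\ca_\theta)\cong\Z^2$ is free. Hence $K_0(\ca_\theta\rtimes_A\Z)\cong\Z^2$, generated by the $\iota_*$-images of the standard generators of $K_0(\ca_\theta)$ (namely $[\unit]_0$ and $[\iota(p_\theta)]_0$), and $K_1(\ca_\theta\rtimes_A\Z)\cong\Z\oplus\Z\oplus\Z_{h_1}\oplus\Z_{h_2}$. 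For the tracial-state claim, given any tracial state $\tau'$ on the crossed product, $\tau'\circ\iota$ is a tracial state on $\ca_\theta$ and therefore, by Elliott's lemma, induces the same map on $K_0(\ca_\theta)$ as the canonical trace $\tau_\theta$; in particular $\tau'_*([\iota(p_\theta)]_0) = (\tau_\theta)_*([p_\theta]_0) = \theta$ and $\tau'_*([\unit]_0) = 1$. Since these two classes generate $K_0(\ca_\theta\rtimes_A\Z)$, the map $\tau'_*$ is determined independently of $\tau'$.

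The main obstacle is pinning down the matrix of $(\alpha_A)_*$ on $K_1(\ca_\theta)$ for rational $\theta$: unlike on $K_0$, Proposition \ref{prop:injectivity} says nothing about $K_1$, and the tracial-state shortcut that fixed the $K_1$ action for irrational $\theta$ in \cite{BCHL18} is unavailable. The null-homotopy of the scalar phase sketched above resolves this uniformly in $\theta$; alternatively, one may invoke the naturality of the PV sequence together with the continuous-field argument of Theorem \ref{thm:ev} (evaluation maps are $K$-theory isomorphisms in all degrees) to transfer the matrix from the irrational case to any $\theta$.
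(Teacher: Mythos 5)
Your proposal is correct and follows essentially the same route as the paper: Proposition \ref{prop:injectivity} makes $\iota_*$ an isomorphism on $K_0$, the Pimsner--Voiculescu sequence then reduces $K_1$ to the cokernel of $\id-\alpha^{-1}_{*1}$ (which the paper outsources to \cite[Proposition 3.1]{BCHL18} and you compute inline via the matrix $A$ on the basis $\{[U_1]_1,[U_2]_1\}$ and the Smith normal form, with the split coming from freeness of $K_0(\ca_\theta)$), and Elliott's lemma applied to $\tau'\circ\iota$ gives the tracial ranges. The only difference is one of exposition, not of substance.
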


\begin{proof}

From Proposition \ref{prop:injectivity} we know that the map  $$\iota_*: K_0(\ca_\theta) \rightarrow K_0(\ca_\theta\rtimes_A \Z)$$ is an isomorphism. Since $[1]_0$ and $[p_\theta]_0$ generate $K_0(\ca_\theta)$, we conclude that $K_0(A_\theta\rtimes_A \Z) \cong \Z\oplus \Z$ and that  $K_0(\ca_\theta\rtimes_A \Z)$ is generated by $[1]_0$ and $\iota_*([p_\theta]_0)$. Now the Pimsner--Voiculescu sequence for $\mathcal{A}_{\theta}\rtimes_A\Z$ reads as follows:
\[
\begin{CD}
K_0 (\mathcal{A}_{\theta}) @>{\id - \alpha^{-1}_{*0}}>>K_0 (\mathcal{A}_{\theta})@>{\iota_{*}}>> K_0 (\mathcal{A}_{\theta} \rtimes_A \Z )   \\
@A{\delta_1}AA & &  @VV{\delta_0}V            \\
 K_1 (\mathcal{A}_{\theta}\rtimes_A \Z) @<<{\iota_{*}}<  K_1 (\mathcal{A}_{\theta}) @<<{\id - \alpha^{-1}_{*1}}< K_1 (\mathcal{A}_{\theta}).
\end{CD}
\]
Since $\iota_*: K_0(\ca_\theta) \rightarrow K_0(\ca_\theta\rtimes_A \Z)$ is an isomorphism, we are left with the short exact sequence
 
 \begin{equation}
0 \longrightarrow \bigslant{ K_1(\mathcal{A}_{\theta})}{\mathrm{im}(\id-\alpha^{-1}_{*1})}\stackrel{\iota_*}{\longrightarrow} K_1(\mathcal{A}_{\theta}\rtimes_A\Z) \stackrel{\delta_1}{\longrightarrow} K_0(\mathcal{A}_{\theta}) \longrightarrow 0.
\end{equation}
Now the same proof as in \cite[Proposition 3.1]{BCHL18} gives that
$$
K_1(\mathcal{A}_{\theta} \rtimes_A \Z) \cong \Z \oplus \Z \oplus \Z_{h_1} \oplus \Z_{h_2}.
$$ 
 
For the second statement, let $\tau'$ be any tracial state on $\mathcal{A}_{\theta}\rtimes_A\Z$. Then $\tau'\circ \iota$ is a tracial state on $\mathcal{A}_{\theta}$. By the previous theorem any tracial state on $\mathcal{A}_{\theta}$ induces the same map from $K_0(\mathcal{A}_{\theta})$ to $\R$, and the map sends $[p_\theta]_0$ to $\theta$. Therefore,
\begin{align*}
\tau'_*(\iota_*([p_\theta]_0))&=(\tau\circ \iota)_*([p_\theta]_0) = \theta.
\end{align*}
This concludes the proof.
\end{proof}

The following result was first obtained by Farsi and Watling in \cite{FW94}.

\begin{cor} \label{cor:same_theta} \cite[Proposition 6]{FW94}
	Let $\theta$ and $\theta'$ be two real numbers, and $A$ and $B$ two matrices in $\SL_2(\Z)$ such that $\tr(A), \tr(B)\not\in\{0,\pm1,2\}$. If $\mathcal{A}_{\theta}\rtimes_A\Z$ is isomorphic to $\mathcal{A}_{\theta'}\rtimes_B\Z$, then 
	$$
	\theta\equiv \pm\theta'\mod \Z\text{\ and\ } I_2-A^{-1}\sim_{\mathrm{eq}}I_2-B^{-1}.
	$$
\end{cor}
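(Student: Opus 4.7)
The plan is to apply Theorem \ref{thm:K-theory} to both crossed products and extract the two conclusions from the induced maps on $K_0$ and $K_1$. Fix a $*$-isomorphism $\Phi\colon \ca_\theta\rtimes_A\Z \to \ca_{\theta'}\rtimes_B\Z$.

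For the conclusion $\theta \equiv \pm\theta' \mod \Z$, I would first use unitality of $\Phi$ to conclude $\Phi_*([\unit]_0) = [\unit]_0$. By Theorem \ref{thm:K-theory}, $K_0(\ca_\theta\rtimes_A\Z)$ is free abelian with basis $\{[\unit]_0, [\iota(p_\theta)]_0\}$, and $K_0(\ca_{\theta'}\rtimes_B\Z)$ has the analogous basis $\{[\unit]_0, [\iota(p_{\theta'})]_0\}$. Since $\Phi_*$ fixes the first basis vector and lies in $\GL_2(\Z)$, its matrix in these bases must take the form $\begin{pmatrix} 1 & a \\ 0 & \e \end{pmatrix}$ with $a\in \Z$ and $\e \in \{\pm 1\}$. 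Then for any tracial state $\tau'$ on $\ca_{\theta'}\rtimes_B\Z$, the composition $\tau'\circ \Phi$ is a tracial state on $\ca_\theta\rtimes_A\Z$, and functoriality of $K_0$ together with the tracial values from Theorem \ref{thm:K-theory} produces
$$\theta = (\tau' \circ \Phi)_*([\iota(p_\theta)]_0) = \tau'_*\bigl(a[\unit]_0 + \e [\iota(p_{\theta'})]_0\bigr) = a + \e\theta',$$
so $\theta \equiv \e\theta' \equiv \pm\theta' \mod \Z$.

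For the equivalence $I_2 - A^{-1} \sim_{\mathrm{eq}} I_2 - B^{-1}$, I would compare the torsion parts of the two $K_1$-groups. Theorem \ref{thm:K-theory} yields
$$K_1(\ca_\theta\rtimes_A\Z) \cong \Z^2 \oplus \Z_{h_1}\oplus \Z_{h_2}, \qquad K_1(\ca_{\theta'}\rtimes_B\Z) \cong \Z^2 \oplus \Z_{h_1'}\oplus \Z_{h_2'},$$
where $\diag(h_1, h_2)$ and $\diag(h_1', h_2')$ are the Smith normal forms of $I_2 - A^{-1}$ and $I_2 - B^{-1}$, with $h_1 \mid h_2$ and $h_1' \mid h_2'$. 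The isomorphism $\Phi_*$ on $K_1$ restricts to an isomorphism of the torsion subgroups $\Z_{h_1}\oplus \Z_{h_2} \cong \Z_{h_1'}\oplus \Z_{h_2'}$, and the uniqueness of invariant factors for finitely generated abelian groups then forces $(h_1, h_2) = (h_1', h_2')$. Two integer matrices are equivalent iff they share a Smith normal form, so $I_2 - A^{-1} \sim_{\mathrm{eq}} I_2 - B^{-1}$.

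The main potential obstacle is justifying the triangular form of $\Phi_*$ in the first step. Once $\Phi_*([\unit]_0) = [\unit]_0$ is in hand, this reduces to the elementary fact that a matrix in $\GL_2(\Z)$ with first column $(1,0)^T$ must have $(2,2)$-entry in $\{\pm 1\}$. Beyond that, the argument is a clean matching of invariants already supplied by Theorem \ref{thm:K-theory}, with no serious analytic content.
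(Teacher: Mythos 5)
Your proposal is correct and follows essentially the same route as the paper: the matrix equivalence is read off from the torsion in the isomorphic $K_1$-groups, and the congruence $\theta\equiv\pm\theta'\pmod\Z$ comes from the triangular form of the induced map on $K_0$ combined with the tracial values $\tau'_*([\unit]_0)=1$ and $\tau'_*([\iota(p_{\theta'})]_0)=\theta'$ from Theorem \ref{thm:K-theory}. Your justification of the triangular form via unitality and $\det=\pm1$ is in fact slightly more explicit than the paper's.
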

\begin{proof}
	From the computation of $K$-theory (Theorem \ref{thm:K-theory}), we obtain the matrix equivalence between $I_2-A^{-1}$ and $I_2-B^{-1}$ directly from the isomorphic $K_1$-groups.
	
	The condition that $\theta = \pm \theta'$ can be viewed as a consequence of isomorphic crossed products having the same \emph{twist}, as defined in \cite{Yin90}. Here we give a direct argument for the reader's convenience. Suppose $\varphi: \mathcal{A}_{\theta}\rtimes_A\Z \rightarrow \mathcal{A}_{\theta'}\rtimes_B\Z$ is an isomorphism. Then the diagram
	\begin{center}
		\begin{tikzpicture}[node distance=2cm, auto]
		\node (1) {$K_0(\mathcal{A}_{\theta}\rtimes_A\Z)$};
		\node (2) [right of=1, node distance=3.5cm] {$K_0(\mathcal{A}_{\theta'}\rtimes_B\Z)$};
		\node (3) [below of=1] {$\R$};
		\node (4) [below of=2] {$\R$};
		\draw[->] (1) to node {$\varphi_*$} (2);
		\draw[->] [swap] (1) to node {$(\tau_A)_*$} (3);
		\draw[->] (2) to node {$(\tau_B)_*$} (4);
		\draw[->] (3) to node {$=$} (4);
		\end{tikzpicture}
	\end{center}
	commutes because of Theorem \ref{thm:K-theory}. The isomorphism $\varphi_*$ is hence represented by a matrix of the form
	$$
	\begin{pmatrix}
	1&n\\
	0&\pm1
	\end{pmatrix}.
	$$
	We compute that
	\begin{align*}
		\theta&= (\tau_A)_*([\iota(p_{\theta})]_0) = (\tau_B)_*\varphi_*([\iota(p_{\theta})]_0)\\
		&= (\tau_B)_*(n[\unit]_0\pm[\iota(p_{\theta'})]_0)\\
		&=n\pm\theta'.
	\end{align*}
	Hence $\theta\equiv\pm\theta'\pmod\Z$ and the proof is complete.
\end{proof}

\begin{rem}
	The twist invariant, as mentioned in the proof of Corollary \ref{cor:same_theta}, is only defined for $C^*$-algebras with certain $K$-theoretic properties. One of the consequences of Theorem \ref{thm:K-theory} is that the twist is well-defined for the class of crossed products that we are considering. 
\end{rem}

We conclude the section with the $K$-theory computation of $\mathcal{A}_{\theta} \rtimes_A \Z$ when  $\tr(A) = 2$ and $A\neq I_2$. The argument is essentially the same to that of the irrational cases investigated in \cite{BCHL18}. To avoid undue repetition, we only give a brief outline.

Any matrix $A$ in $\SL_2(\Z)$ with $\tr(A)=2$ and $A\neq I_2$ is conjugate in $\SL_2(\Z)$ to either 
\begin{equation}\label{reduction}
\begin{pmatrix}
1&h_1\\
0&1
\end{pmatrix}\ \text{or\ } 
\begin{pmatrix}
1&0\\
h_1&1
\end{pmatrix},
\end{equation}
for some positive integer $h_1$ (see \cite{MKS04}). Therefore, the computation reduces to the special cases when $A$ is equal to either matrix in (\ref{reduction}). At the $K_0$-level, Diagram \ref{dia:commutative} yields the commutative diagram
\begin{center}
		\begin{tikzpicture}[node distance=2cm, auto]
		\node (1) {$K_0(C([0,1])\rtimes_{ \Omega, r} \Z^2)$};
		\node (2) [right of=1, node distance=6cm] {$K_0(C([0,1])\rtimes_{ \tilde{\Omega}, r} (\Z^2\rtimes_A \Z))$};
		\node (3) [below of=1] {$K_0(\ca_\theta  )$};
		\node (4) [below of=2] {$K_0(\ca_\theta \rtimes_A \Z).$};
		\draw[->] (1) to node {$\tilde{\iota}_*$} (2);
		\draw[->] [swap] (1) to node {$(\mathrm{ev}_\theta)_*$} (3);
		\draw[->] (2) to node {$(\mathrm{ev}_\theta)_*$} (4);
		\draw[->] (3) to node {$\iota_*$} (4);
		\end{tikzpicture}.
	\end{center}
	 Since the homomorphism $\iota_*$ is known to be injective when $\theta$ is irrational \cite{BCHL18}, the same must be true for $\tilde{\iota}_*$. Applying commutativity of the diagram again we see that $\iota_*$ is injective for any $\theta$. Then the Pimsner--Voiculescu sequence gives the (split) short exact sequence
\begin{equation*}\label{tr_2}
0 \longrightarrow  K_0(\mathcal{A}_{\theta}) \stackrel {\iota_*}{\longrightarrow} K_0(\mathcal{A}_{\theta}\rtimes_A\Z)\stackrel{\delta_0}{\longrightarrow} \mathrm{ker}(\id-\alpha^{-1}_{*1}) \longrightarrow 0.
\end{equation*}
The assumption on $A$ implies that $\mathrm{ker}(\id-\alpha^{-1}_{*1})=\Z[U_i]_1$ for $i=1,2$. Now it is a complete repetition of the proof of \cite[Proposition 3.3]{BCHL18} and the discussion that precedes it to find a preimage of $[U_i]_1$ under $\delta_0$. Essentially one computes the preimage $[P]_0$ of $[U_i]_1$ for the case $\theta = 0$, which was done in \cite{Thesis}. Then choosing a suitable $*$-homomorphism from $C(\T)$ to $\ca_\theta$ and applying naturality of the Pimsner-Voiculescu sequence allows one to find the preimage $[P_{U_1,w}]_0$ of $[U_i]_1$ for any $\theta\in [0,1]$. Finally, from the proof of \cite[Proposition 3.3]{BCHL18} we see that the trace of $[P_{U_1,w}]_0$ does not depend on the angle $\theta$.

The next theorem summarizes our discussion for the case $\tr(A) = 2$. We write $P_A$ for the image of $P_{U_i,w}$ under the isomorphism coming from conjugacy of matrices.

\begin{thm}\label{thm:Ktr=2}
	Let $\theta$ be any real number in $[0,1]$, and let $A\in \SL_2(\Z)$ be a matrix of infinite order with $\tr(A) = 2$. Suppose the rank-one matrix $I_2-A^{-1}$ has the Smith normal form  $\diag(h_1,0)$. Then
		\begin{align*}
			K_0(\mathcal{A}_{\theta} \rtimes_A \Z) &\cong \Z\oplus \Z\oplus \Z, \\
			K_1(\mathcal{A}_{\theta} \rtimes_A \Z) &\cong \Z\oplus \Z \oplus \Z \oplus \Z_{h_1}
		\end{align*}
	and a set of generators for $K_0$ is given by $[\unit]_0$, $\iota_*([p_\theta]_0)$, and $[P_A]_0$. 
	
	Moreover, given any tracial state $\tau'$ on $\ca_\theta\rtimes_A \Z$ the induced map
$$
\tau'_*: K_0(\mathcal{A}_{\theta}\rtimes_A\Z)\rightarrow \R
$$ satisfies $\tau'_*( [\unit]_0  ) = 1$, $\tau'_*([\iota(p_\theta)]_0) = \theta$, and $\tau'_*( [P_A]_0  ) = 1$. In particular, all tracial states on $\ca_\theta\rtimes_A\Z$ induce the same map on $K_0(\mathcal{A}_{\theta}\rtimes_A\Z)$. 
\end{thm}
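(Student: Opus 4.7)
The plan is to mirror the irrational-angle argument of \cite{BCHL18} for the trace-2 case, using the continuous-field machinery of this section to extend every step to every real $\theta$. As a preliminary reduction I would invoke the classification from \cite{MKS04} to replace $A$ by one of the two triangular representatives in (\ref{reduction}); conjugate matrices in $\SL_2(\Z)$ induce $\ast$-isomorphic crossed products, so this is harmless. With such a concrete $A$, a direct calculation on $K_1(\ca_\theta) \cong \Z[U_1]_1 \oplus \Z[U_2]_1$ shows that $\alpha^{-1}_{*1}$ fixes one of the two generators exactly and translates the other by $\pm h_1$ times the fixed class; hence $\ker(\id - \alpha^{-1}_{*1}) = \Z[U_i]_1$ and $K_1(\ca_\theta)/\mathrm{im}(\id - \alpha^{-1}_{*1}) \cong \Z \oplus \Z_{h_1}$.

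Next I would establish injectivity of $\iota_*:K_0(\ca_\theta) \to K_0(\ca_\theta \rtimes_A \Z)$ for every $\theta \in [0,1]$ by running the commutative diagram of Proposition \ref{prop:injectivity} verbatim: that argument requires only the Baum-Connes conjecture with coefficients for $\Z^2$ and $\Z^2 \rtimes_A \Z$ (which hold for any $A$) and the injectivity of $\iota_*$ at irrational $\theta$ (known from \cite{BCHL18}). Injectivity at $K_0$ forces $\id - \alpha^{-1}_{*0} = 0$, so the Pimsner-Voiculescu hexagon collapses to the two short exact sequences
\begin{equation*}
0 \to K_0(\ca_\theta) \xrightarrow{\iota_*} K_0(\ca_\theta \rtimes_A \Z) \xrightarrow{\delta_0} \Z[U_i]_1 \to 0,
\end{equation*}
\begin{equation*}
0 \to \Z \oplus \Z_{h_1} \xrightarrow{\iota_*} K_1(\ca_\theta \rtimes_A \Z) \xrightarrow{\delta_1} K_0(\ca_\theta) \to 0,
\end{equation*}
the first of which splits because $\Z$ is free. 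The stated abstract isomorphisms $K_0 \cong \Z^3$ and $K_1 \cong \Z^3 \oplus \Z_{h_1}$ follow immediately, with $[\unit]_0$ and $\iota_*([p_\theta]_0)$ accounting for the copy of $K_0(\ca_\theta)$ sitting inside $K_0(\ca_\theta \rtimes_A \Z)$.

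The main obstacle is constructing the third generator $[P_A]_0$ and verifying that $\tau'_*([P_A]_0) = 1$ for every tracial state and every $\theta$. Following the naturality strategy sketched just above the theorem, I would begin from $\theta = 0$: then $\ca_0 = C(\T^2)$, and an explicit projection lifting $[U_i]_1$ under $\delta_0$ was constructed in \cite{Thesis}. Because $\alpha_A$ fixes $U_i$ on the nose (a consequence of the triangular form of $A$), the subalgebra generated by $U_i$ provides a $\Z$-equivariant unital $\ast$-homomorphism $C(\T) \to \ca_\theta$ from the trivial dynamical system into $(\ca_\theta, \alpha_A)$; naturality of the Pimsner-Voiculescu sequence then transports the $\theta = 0$ preimage to a class $[P_A]_0 \in K_0(\ca_\theta \rtimes_A \Z)$ with $\delta_0([P_A]_0) = [U_i]_1$. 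For the traces, $\tau'_*([\unit]_0) = 1$ is immediate, and $\tau'_*([\iota(p_\theta)]_0) = \theta$ follows from Elliott's lemma applied to $\tau' \circ \iota$ exactly as in the proof of Theorem \ref{thm:K-theory}. The independence of $\tau'_*([P_A]_0)$ from $\theta$ is the real payoff of the naturality construction: the trace of $P_A$ is determined entirely by the pulled-back $C(\T)$-data, whose $\theta = 0$ computation in \cite{Thesis} yields the value $1$.
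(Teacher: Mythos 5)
Your proposal is correct and follows essentially the same route as the paper: reduction to the triangular representatives of (\ref{reduction}), injectivity of $\iota_*$ via the continuous-field diagram from Proposition \ref{prop:injectivity}, collapse of the Pimsner--Voiculescu sequence, and construction of the third generator by naturality applied to the equivariant map from the trivial system $(C(\T),\mathrm{id})$ using the $\theta=0$ computation of \cite{Thesis}. The only detail worth adding is that the second short exact sequence also splits (since $K_0(\ca_\theta)\cong\Z^2$ is free), which is needed to read off $K_1$.
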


\begin{rem}
 The main issue that arises when one attempts to completely classify the crossed products considered in this article, is the problem of producing $\ast$-isomorphisms 
$$\mathcal{A}_\theta\rtimes_A \Z \rightarrow \mathcal{A}_{\theta'}\rtimes_B \Z,$$
provided that $\theta,\theta'$ and $A$ and $B$ are suitably related.
When the angles are assumed to be irrational, such isomorphisms were produced in \cite{BCHL18} by invoking abstract classification machinery, which reduces the problem to producing isomorphisms between the respective Elliott-invariants.

In the rational case this approach is no longer available. Instead, one can try to produce such isomorphisms directly on the level of the crossed products. This however turns out to be very difficult. One possible approach is to further exploit the realization of the algebras $\mathcal{A}_\theta\rtimes_A \Z$ as twisted group algebras of the form $C^*(\Z^2\rtimes_A \Z,\tilde{\omega}_\theta)$. We will discuss this approach in two directions.

Suppose first that $A\in \SL_2(\Z)$ admits a reversing symmetry $B\in \GL_2(\Z)$ (i.e. $BA = A^{-1}B$) such that $\det(B)=-1$. Routine calculations reveal that $B$ defines an automorphism $\Phi_B\in Aut(\Z^2\rtimes_A \Z)$ such that
$
\tilde{\omega}_\theta = \tilde{\omega}_{-\theta}\circ \Phi_B.$ In particular, under these assumptions, we can conclude that the crossed products $\mathcal{A}_\theta \rtimes_A \Z$ and $\mathcal{A}_{\theta'}\rtimes_A \Z$ are isomorphic if and only if $\theta = \pm \theta' \pmod \Z$.
It is known, that not all hyperbolic matrices admit reversing symmetries. For example, it was shown in \cite{BR97} that the hyperbolic matrix $$\begin{pmatrix}
4 & 9 \\
7 & 16
\end{pmatrix}$$ admits no reversing symmetry. However, many hyperbolic matrices $A =\begin{pmatrix}
a & b \\ c & d
\end{pmatrix}\in \SL_2(\Z)$ do admit reversing symmetries with determinant $-1$, in particular those for which either $b | (a-d)$ or $c|(a-d)$ (see \cite[3.1.1, Proposition 4]{BR97}).	
\end{rem}
   
\begin{rem}
	In a different direction, let us explain the role that the structure of the group $\Z^2\rtimes_A \Z$ plays. Our main observation is, that the center of $\Z^2\rtimes_A \Z$ is non-trivial if and only if $\tr(A)=2$. In that case, one easily checks that $Z(\Z^2\rtimes_A \Z)\cong \Z$ and that the quotient of $\Z^2\rtimes_A \Z$ by its center is isomorphic to $\Z^2$.
In particular, $\Z^2\rtimes_A \Z$ is a nilpotent group of nilpotency class $2$.
This observation allows us to apply recent results on $\mathrm{C}^*$-superrigidity obtained in \cite{MR3861703}: indeed, given two matrices $A,B\in \SL_2(\Z)$ with $\tr(A)=2$, a careful comparison of the equivalence classes of the triples $(Z(\Z^2\rtimes_A \Z),\Z^2\rtimes_A \Z/Z(\Z^2\rtimes_A \Z),\sigma_A)$ and $(Z(\Z^2\rtimes_B \Z),\Z^2\rtimes_B \Z/Z(\Z^2\rtimes_B \Z),\sigma_B)$, where $\sigma_A$ and $\sigma_B$ denote the respective extension cocycles, reveals, that $C(\T^2)\rtimes_A \Z\cong C^*(\Z^2\rtimes_A\Z)\cong C^*(\Z^2\rtimes_B\Z)\cong C(\T^2)\rtimes_B \Z$ if and only if $\tr(B)=2$ and $I_2-A^{-1}$ and $I_2-B^{-1}$ share the same Smith normal form up to sign.
This provides an interesting alternative approach to \cite[Theorem~20]{FW94} in the case $\theta=0=\theta'$.  
\end{rem}

\bibliographystyle{plain}
\bibliography{Biblio-Database}

\end{document}